\newcommand{\brmatrix}[1]{\left[\begin{matrix} #1 \end{matrix}\right]}
\begin{document}
\title{Decaying and non-decaying badly approximable numbers}

\authorryan\authorlior\authordavid

\begin{Abstract}
We call a badly approximable number \emph{decaying} if, roughly, the Lagrange constants of integer multiples of that number decay as fast as possible. In this terminology, a question of Y. Bugeaud ('15) asks to find the Hausdorff dimension of the set of decaying badly approximable numbers, and also of the set of badly approximable numbers which are not decaying. We answer both questions, showing that the Hausdorff dimensions of both sets are equal to one. Part of our proof utilizes a game which combines the Banach--Mazur game and Schmidt's game, first introduced in Fishman, Reams, and Simmons (preprint '15).
\end{Abstract}
\maketitle

\section{Introduction}
\noindent Fix $d\in\N$. For each $\xx\in\R^d$, write
\begin{align*}
L(\xx) &= \liminf_{q \to \infty} q^{1/d} \dist(q\xx,\Z^d) \\
&= \sup\{c \ge 0 \colon \|\xx - \pp/q\| \ge c/q^{1 + 1/d}\text{ for all but finitely many }(\pp,q) \in \Z^d\times \N\},
\end{align*}
where $\|\cdot\|$ denotes some norm on $\R^d$ (e.g. the max norm). The point $\xx$ is called \emph{well approximable} if $L(\xx) = 0$, and \emph{badly approximable} if $L(\xx) > 0$. If $\xx$ is badly approximable and $i/j\in\Q$, we can get a trivial bound on $L\big(\frac ij \xx\big)$ in terms of $L(\xx)$: if $L(\xx) > c$, then for all but finitely many $(\pp,q) \in \Z^d\times \N$
\[
\left\| \frac ij \xx - \frac \pp q\right\| = \frac ij\left\|\xx - \frac{j\pp}{i q}\right\| \ge \frac ij \frac{c}{(iq)^{1 + 1/d}} = \frac{c}{i^{1/d} j} \frac{1}{q^2}
\]
and thus
\begin{equation}
\label{crude}
L\left(\frac ij \xx\right) \ge \frac{1}{i^{1/d} j}L(\xx).
\end{equation}
The special case $j = 1$ gives the bound
\begin{equation}
\label{crude2}
L(i\xx) \geq L(\xx)/i^{1/d}.
\end{equation}
Recently, Y. Bugeaud \cite{Bugeaud3} has investigated the question of whether the bound \eqref{crude2} is ``optimal''.  Precisely, call a badly approximable point $\xx$ \emph{decaying} if there exists $C > 0$ such that for all $i\in\N$,
\[
L(i\xx) \leq C/i^{1/d}.
\]
In this terminology, the main result of \cite{Bugeaud3} states that certain algebraic badly approximable points are decaying. Bugeaud also asks the following question:

\begin{question}[{\cite[Problem 4.4]{Bugeaud3}}]
\label{questionbugeaud}
What is the Hausdorff dimension of the set of decaying badly approximable\Footnote{The condition ``badly approximable'' has been omitted in the statement of \cite[Problem 4.4]{Bugeaud3}, but the question obviously makes no sense without it.} points? of the set of badly approximable points which are not decaying?
\end{question}

The latter question has been recently answered in dimension 1 by a preprint of D. Badziahin and S. Harrap \cite[Theorem 15]{BadziahinHarrap}.\Footnote{In the current version of their paper they do not mention this connection; we informed them of it by private communication while writing this paper.} In this paper, we give a different proof of Badziahin--Harrap's result, which is valid in higher dimensions as well. Our proof shows that the set of badly approximable points which are not decaying is hyperplane winning, which shows that its intersection with certain ``nice'' fractals has full Hausdorff dimension; see \6\ref{sectiongames} for more details.

We also answer Bugeaud's first question in dimension 1, showing that the set of decaying badly approximable numbers has full dimension. Our proof uses the so-called Banach--Mazur--Schmidt game recently introduced in \cite{FRS}.

{\bf Acknowledgements.} The second-named author was supported in part by the Simons Foundation grant \#245708. The third-named author was supported in part by the EPSRC Programme Grant EP/J018260/1. We wish to thank Jayadev Athreya for introducing us to a special case of the theorems proved in this paper which motivated this work. We thank the anonymous referee for helpful comments.

\subsection{Main results}
In our first main theorem, we find a full dimension set of badly approximable points which are not only non-decaying, but non-decaying when multiplied by any chosen sequence of rational numbers, with as little decay in the Lagrange values as desired. (The full dimension set depends on the chosen sequence and decay rate.) Precisely:

\begin{theorem}
\label{theoremslow}
Let $\frac{i_1}{j_1},\frac{i_2}{j_2},\ldots \in \Q$ be a sequence of distinct rational numbers, and let $g:\N\to(0,\infty)$ satisfy $g(k)\to\infty$ as $k\to\infty$. Then the set
\begin{equation}
\label{slow}
\left\{\xx\in\BA_d : \limsup_{k\to\infty} g(k) L\left(\frac{i_k}{j_k}\xx\right) = \infty \right\}
\end{equation}
is hyperplane winning.
\end{theorem}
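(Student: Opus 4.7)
The plan is to construct a winning strategy for Alice in the hyperplane game which simultaneously forces $\xx\in\BA_d$ and the limsup condition. Since $g(k)\to\infty$ we may pick a subsequence $k_1<k_2<\cdots$ and positive constants $c_s\to 0$ with $c_s g(k_s)\to\infty$, and by thinning further we can arrange $\sum_s c_s^d<\infty$. It then suffices to force $\xx\in\BA_d$ and $L\bigl((i_{k_s}/j_{k_s})\xx\bigr)\ge c_s$ for every $s$, since that makes $g(k_s)\,L\bigl((i_{k_s}/j_{k_s})\xx\bigr)\ge c_s g(k_s)\to\infty$.

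First translate each of these conditions into an avoidance statement. The condition $L(\xx)\ge c^*$, for some fixed small $c^*>0$, requires $\xx$ to avoid $B(\pp/q,\ c^*/q^{1+1/d})$ for all but finitely many $(\pp,q)$; unwinding the scaling, $L\bigl((i_{k_s}/j_{k_s})\xx\bigr)\ge c_s$ requires $\xx$ to avoid
\[
B\!\left(\frac{j_{k_s}\pp}{i_{k_s}q},\ \frac{c_s\,j_{k_s}}{i_{k_s}\,q^{1+1/d}}\right)
\]
for all but finitely many $(\pp,q)\in\Z^d\times\N$. Each such \emph{forbidden ball} $B(\rr,\delta_\rr)$ can be eliminated in a single Alice move: she plays a hyperplane through $\rr$ with thickness $\delta_\rr$, provided her thickness cap at that round (proportional to Bob's current ball radius $\rho_n$) is at least $\delta_\rr$.

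The key estimate is a volume count: for target $s$, the number of forbidden balls whose radii lie in a fixed geometric window $[\beta^2\rho_n,\beta\rho_n]$ and whose centers lie within distance $\rho_n$ of Bob's current ball is $O(c_s^d)$ uniformly in $\rho_n$; crucially, the factors $i_{k_s}$ and $j_{k_s}$ cancel in this count. Thus the aggregate rate of forbidden balls becoming \emph{critical} per round (i.e., whose thickness requirement first matches this round's cap) is at most $O\bigl((c^*)^d\bigr)+\sum_s O(c_s^d)$, which we can make strictly less than $1$ by choosing $c^*$ and the $c_s$ appropriately.

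Alice's strategy is then to maintain a priority queue of critical forbidden balls ordered by deadline (the last round before the thickness cap falls below the required thickness) and to play, at each round, a hyperplane through the rational at the head of the queue with thickness equal to its forbidden radius. By the rate bound this earliest-deadline-first schedule meets every deadline, so the winning point $\xx$ avoids every forbidden ball, yielding $\xx\in\BA_d$ and $L\bigl((i_{k_s}/j_{k_s})\xx\bigr)\ge c_s$ for all $s$. The main obstacle is precisely this combinatorial scheduling: one must show that the one-hyperplane-per-round budget suffices to handle the BA condition together with countably many targeting conditions whose underlying rationals $i_{k_s}/j_{k_s}$ may be arbitrarily large. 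It is resolved by the level-uniform $O(c_s^d)$ count---whose independence from $i_{k_s},j_{k_s}$ is essential---together with the freedom, afforded by $g\to\infty$, to make the subsequence $(k_s)$ grow fast enough that $\sum c_s^d$ is arbitrarily small.
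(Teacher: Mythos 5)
Your overall architecture (convert each $L$-condition into an avoidance problem, interleave countably many such problems, bound the number of moves Alice needs per round) is the right one, and it matches the paper's high-level plan. However, the crucial estimate in your proposal is wrong, and the failure is exactly where the geometry of rational points becomes essential. You claim that the number of forbidden balls of level $s$ with radius in a geometric window $[\beta^2\rho_n,\beta\rho_n]$ and center near Bob's ball is $O(c_s^d)$, uniformly in $\rho_n$. That figure is what you get from the ``volume'' part of a lattice count: for a single denominator $q$, the number of centers $j_{k_s}\pp/(i_{k_s}q)$ within distance $\rho_n$ of Bob's center is at most $C\bigl(1+(\rho_n\,i_{k_s}q/j_{k_s})^d\bigr)$, and summing the second term over the $\approx(c_s j_{k_s}/(i_{k_s}\rho_n))^{d/(d+1)}$ admissible values of $q$ indeed produces $O(c_s^d)$ with the $i,j$ factors canceling. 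But the first term does not cancel: it contributes roughly the number of admissible $q$, namely $\bigl(c_s j_{k_s}/(i_{k_s}\rho_n)\bigr)^{d/(d+1)}$, which blows up as $\rho_n\to 0$. Thus the per-window count is not $O(c_s^d)$; once Bob's radius drops below $\sim c_s j_{k_s}/i_{k_s}$, the spacing of the lattice $j_{k_s}\Z^d/(i_{k_s}q)$ exceeds $\rho_n$ and the trivial term dominates. So the aggregate ``rate'' you invoke for the earliest-deadline-first scheduling argument is not bounded, and the strategy of spending one hyperplane per forbidden ball cannot work.

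The missing idea is precisely the Simplex Lemma (Lemma~\ref{simplex} in the paper). It asserts that all rationals $\pp/q$ with $q$ below the relevant threshold and lying in a ball of radius $r$ are contained in a single affine hyperplane; hence after rescaling by $j_k/i_k$, all of the potentially many forbidden centers inside (twice) Bob's current ball lie on one hyperplane, and Alice can remove them with a single move. This collapses your unbounded count to exactly one hyperplane per $(k,\text{window})$, making the bookkeeping trivial. The paper then interleaves the countably many values of $k$ by assigning the turns in the arithmetic progression $P_k=2^{k-1}+2^k\N_0$ to the $k$-th constraint (the classical countable-intersection trick for Schmidt games), and chooses $\epsilon_k=g(k)^{-1/2}$ so that after passing to a subsequence the thickness requirement $\beta^{2^k+1}/(4d!)\ge\epsilon_k$ is met. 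Your decomposition into a subsequence $(k_s)$ with constants $c_s\to 0$, $c_s g(k_s)\to\infty$ plays the same role as the paper's $\epsilon_k=g(k)^{-1/2}$, but without the Simplex Lemma and the arithmetic-progression bookkeeping the scheduling argument has no chance of closing. I'd suggest discarding the volume count and the priority-queue scheduling, and instead proving (or citing) the Simplex Lemma and arranging your turns into disjoint progressions, one per constraint.
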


This theorem is quite similar to \cite[Theorem 15]{BadziahinHarrap}, with three main differences:
\begin{itemize}
\item We allow any sequence of rationals $\big(\frac{i_k}{j_k}\big)_k$, rather than just the sequence of all natural numbers.
\item We prove the theorem in any dimension, rather than just dimension 1.
\item We prove that the set in question is hyperplane winning, whereas Badziahin and Harrap show that it is Cantor-winning. These concepts share many similar properties and both imply full Hausdorff dimension, but the former is stronger \cite[Theorem 12]{BadziahinHarrap} and implies strong $C^1$ incompressibility on sufficiently regular fractals (see \cite{BFKRW}).
\end{itemize}

The special case $\frac{i_k}{j_k} = k$, $g(k) = k^{1/d}$ yields an answer to the latter part of Question \ref{questionbugeaud}, namely it shows that the set of badly approximable points which are not decaying is hyperplane winning, and therefore of full Hausdorff dimension.

We remark that this theorem also answers a related question. Namely, fix an irrational $i/j\in\Q$ and consider the sequence $i_k/j_k = (i/j)^k$. By \eqref{crude} we have $L\big((i/j)^k \xx\big) \ge \frac{1}{(i^{1/d}j)^k}L(\xx)$, so it is natural to ask about the size of the set of $\xx$ for which 
\begin{equation}
\label{powers}
\lim_{k \to \infty} (i^{1/d}j)^kL\left((i/j)^k \xx\right) = \infty.
\end{equation}
Taking $i_k/j_k = (i/j)^k$ and any $g(k) \in o\big((i^{1/d}j)^k\big)$, Theorem \ref{theoremslow} implies that for a full-dimension set of $\xx \in \BA_d$ there exists a sequence $(k_n)_n$ such that $L\big((i/j)^{k_n} \xx\big) \ge \frac{1}{g(k_n)}$ for all $n$. Fix such an $\xx$; applying \eqref{crude} one then obtains $L\big((i/j)^{k} \xx\big) \ge \left(g(k_n)(i^{1/d}j)^{k-k_n}\right)^{-1}$ for all $k_n \leq k < k_{n+1}$. It follows easily from this that \eqref{powers} holds for this $\xx$. Thus, the set of $\xx$ satisfying \eqref{powers} has full Hausdorff dimension in $\R^d$. As a concrete example, there is a full-dimension set of $x \in \R$ such that $L(2^n x) \in \omega(2^{-n})$.

On the other hand, in dimension one we show that the set of $x$ for which $L\big(\frac ij x\big)$ decays quickly also has full dimension, providing a complement to both of the above results. Namely, we obtain full dimension for the set of $x$ for which $\lim_{k \to \infty} (i j)^k L\big((\frac ij)^k x\big) < \infty$, as well as full dimension of the set of decaying badly approximable numbers, via the following theorem:

\begin{theorem}
\label{theoremfast}
The set $\left\{x \in \BA_1 \colon L\left(\frac ij x\right) \le \frac{1}{ij}\text{ for all reduced }\frac ij \in \Q\right\}$ has full Hausdorff dimension in $\R$.
\end{theorem}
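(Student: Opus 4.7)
The plan is to prove the set in the theorem is \emph{winning} for the Banach--Mazur--Schmidt (BMS) game of \cite{FRS}, which by the results of that paper implies full Hausdorff dimension in $\R$. Since $\BA_1$ is Schmidt winning, hence also BMS winning, and since BMS-winning sets are closed under countable intersections, it suffices to prove that for every reduced $i/j \in \Q$ the set
\[
S_{i,j} := \bigl\{x \in \R : L(\tfrac{i}{j}x) \le \tfrac{1}{ij}\bigr\}
\]
is BMS winning; intersecting these with $\BA_1$ then gives the theorem.

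I will describe Alice's strategy on $S_{i,j}$. Given Bob's $n$th interval $B_n = [x_n - r_n,\,x_n + r_n]$, Alice uses her Banach--Mazur freedom to choose an integer denominator $q_n$ exceeding both $q_{n-1}$ and $j/(ir_n)$. Because the lattice of centers $\{jp/(iq_n) : p \in \Z\}$ has spacing $j/(iq_n) < r_n$, there exists $p_n \in \Z$ with
\[
J_{p_n,q_n} := \Bigl[\tfrac{jp_n}{iq_n} - \tfrac{1}{i^2 q_n^2},\ \tfrac{jp_n}{iq_n} + \tfrac{1}{i^2 q_n^2}\Bigr] \subseteq B_n,
\]
and Alice plays $A_n := J_{p_n,q_n}$. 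Every $x \in A_n$ then satisfies $|\tfrac{i}{j}x - p_n/q_n| \le \tfrac{1}{ij\, q_n^2}$, i.e.\ the single-pair witness of $L(\tfrac{i}{j}x) \le \tfrac{1}{ij}$ at denominator $q_n$. Since $q_n \to \infty$, the play point $x = \bigcap_n A_n$ enjoys this bound for an infinite sequence of pairs, so $L(\tfrac{i}{j}x) \le \tfrac{1}{ij}$ and $x \in S_{i,j}$.

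The principal technical issue will be reconciling this strategy with the precise rules of the BMS game of \cite{FRS}: the ratios $|A_n|/|B_n|$ are of order $1/(q_n^2 r_n)$ and so can be extremely small, forcing Alice to invoke the Banach--Mazur component throughout, while enough Schmidt-like structure must survive to preserve the countable-intersection property used when combining the countable family $\{S_{i,j}\}_{i/j}$ with $\BA_1$. Once the needed facts from \cite{FRS} (BMS winning $\Rightarrow$ full Hausdorff dimension, closure under countable intersections, and Schmidt winning $\Rightarrow$ BMS winning) are in hand, the explicit strategy above closes the argument.
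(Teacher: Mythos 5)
There is a fundamental gap: your strategy hinges on the assertion that $\BA_1$ is BMS winning on $\R$ (``Since $\BA_1$ is Schmidt winning, hence also BMS winning''), but this is false, and the implication runs the wrong way. By Theorem~\ref{BMSporous}, if $\BA_1$ were BMS winning on $\R$ then $\R\setminus\BA_1$ would be a countable union of uniformly porous sets and hence, by Proposition~\ref{propositionBMScomplement}, would have dimension $<1$; but $\R\setminus\BA_1$ contains the (full-measure) well-approximable numbers. Schmidt winning is a much weaker property than BMS winning: in the BMS game Alice has the \emph{strict} Schmidt-type rules and Bob has the Banach--Mazur freedom, so BMS winning is harder to achieve, not easier. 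Without $\BA_1$ being BMS winning you cannot intersect it with your sets $S_{i,j}$ and retain any dimension conclusion; knowing that $\bigcap S_{i,j}$ is conull in $\R$ says nothing about its trace on the measure-zero set $\BA_1$.

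A second, related issue: the strategy you describe is not legal in the BMS game. You write ``Alice uses her Banach--Mazur freedom to choose $q_n$'' and then plays $A_n = J_{p_n,q_n}$, an interval of radius $1/(i^2q_n^2)$. But Alice's ball \emph{must} have radius exactly $\beta\rho_k$; she has freedom of position only. To land $A_n$ inside $J_{p_n,q_n}$ while also having $J_{p_n,q_n}\subset B_n$ requires $j/(2iq_n) \lesssim r_n$ and $\beta r_n \le 1/(i^2q_n^2)$ simultaneously, which forces $r_n \gtrsim j^2\beta$ and so fails once $r_n\to 0$. The paper sidesteps both obstructions at once by playing the BMS game on the Cantor-like set $F_M$ (bounded partial quotients), which is contained in $\BA_1$ and supports an Ahlfors $\delta_M$-regular measure: there, Proposition~\ref{propositionBMSwinning} (proved via a continued-fraction/matrix argument, Lemma~\ref{lemmachoices}) shows each $\{x\in F_M : L(\tfrac ij x)\le\tfrac{1}{ij}\}$ is BMS winning, so their intersection is conull for $\HH^{\delta_M}$ and has dimension $\ge\delta_M\to 1$. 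Your argument would need to be re-founded on such a fractal subspace rather than on $\R$, and the move-construction would need to respect Alice's fixed radius.
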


Since the set in question is contained in the set of decaying badly approximable numbers, Theorem \ref{theoremfast} answers the first part of Question \ref{questionbugeaud} by showing that the set of decaying badly approximable points has full dimension in $\R$.



\section{Games}
\label{sectiongames}

In this section we describe two variants of Schmidt's game which will be used in the following sections. Both games are played by two players, whom we will call Alice and Bob. The \emph{hyperplane absolute game} (or just \emph{hyperplane game}) was introduced in \cite{BFKRW} and involves a parameter $0 < \beta < 1/3$ and a target set $S \subset \R^d$. Bob begins by choosing a ball $B_0 = B(\xx_0, \rho_0) \in \R^d$. Inductively, if $B_i = B(\xx_i, \rho_i)$ have been chosen for $i = 1, \dots, k$, Alice chooses a hyperplane $\LL_{k + 1}$ and a number $0 < \epsilon_{k + 1} \leq \beta$, determining a set $A_{k + 1} = \thickvar\LL{\epsilon_{k + 1} \rho_{k}} = \{\xx\in\R^d : \dist(\xx,\LL) \leq \epsilon_{k + 1} \rho_k\}$. Bob then must choose a ball $B_{k + 1} = B(\xx_{k + 1}, \rho_{k + 1}) \subset B_k \setminus A_{k + 1}$ satisfying $\rho_{k + 1} = \beta \rho_k$.\Footnote{The original hyperplane game only requires Bob's radius to satisfy $\rho_{k + 1} \geq \beta \rho_k$ rather than equality, making the game discussed here the ``modified hyperplane game''. However, the hyperplane game is equivalent to the modified hyperplane game (this follows from appropriately modifying the proof of \cite[Proposition 4.5]{FSU4}), so this distinction is not important.} The sets $B_0 \supset B_1 \supset B_2 \supset \dots$ are closed and nested with radii tending to zero, so the intersection $B_\infty = \bigcap_{k=0}^\infty B_k$ is a singleton. The unique point $\xx$ such that $B_\infty = \{\xx\}$ is called the \emph{result} of the game. If $\xx\in S$, Alice is declared the winner; otherwise, Bob wins. If Alice has a strategy to win this game regardless of how Bob plays for each $0 < \beta < 1/3$, then $S$ is called \emph{hyperplane absolute winning}, or just \emph{hyperplane winning}. Hyperplane winning sets intersect any sufficiently regular fractal in a set of full dimension, and the class of hyperplane winning sets is closed under diffeomorphisms and countable intersections. Combining these three properties one obtains \emph{strong $C^1$-incompressibility} of hyperplane winning sets on ``nice'' fractals (see \cite{BFKRW} for precise definition and discussion):

\begin{theorem}[{\cite[Corollary 5.4]{BFKRW}}]
Let $S$ be a hyperplane winning subset of $\R^d$, let $K$ be the support of an absolutely decaying and Ahlfors regular measure, and let $f_i : U \to \R^d$ ($i\in\N$) be nonsingular $C^1$ maps. Then
\[\dim\left( \bigcap_{i = 1}^\infty f^{-i}(S) \cap K\right) = \dim(K).\]
In particular, $\dim(S) = d$.
\end{theorem}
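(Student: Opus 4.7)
The plan is to decompose the statement into two general facts plus a simple deduction. First, that hyperplane winning sets are closed under countable intersections and under pullback by nonsingular $C^1$ maps; second, that any hyperplane winning set $T\subset\R^d$ satisfies $\dim(T\cap K)=\dim K$ whenever $K$ is the support of an absolutely decaying Ahlfors regular measure $\mu$. Given both, $\bigcap_i f_i^{-1}(S)$ is hyperplane winning, so it meets $K$ in a set of full dimension, and the final ``in particular'' is recovered by taking $K=[0,1]^d$ with Lebesgue measure.

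For the closure properties I would proceed as follows. Countable intersection is handled by diagonal bookkeeping: letting $\sigma_i$ be Alice's winning strategy for $S_i = f_i^{-1}(S)$, she plays $\sigma_{i_k}$ at stage $k$, where $(i_k)$ runs through every index infinitely often; each $\sigma_{i_k}$ is a legal hyperplane move of width $\leq \beta$, and the resulting outcome lies in every $S_i$. For pullback under a nonsingular $C^1$ map $f$, localize to a neighborhood where $f$ is bi-Lipschitz; Taylor's theorem then shows that the preimage of an $\epsilon\rho$-neighborhood of a hyperplane is contained in a $C\epsilon\rho$-neighborhood of an honest hyperplane through Bob's current center, with a quadratic error absorbed by passing to a finer scale. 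Alice can thus transport her target-space strategy to the source-space game after a mild adjustment of the parameter $\beta$.

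For the core dimension claim, fix $\beta>0$ small and play the hyperplane $\beta$-game with Bob constrained to choose balls centered on $K$. At each stage, Alice's move $A_{k+1}$ is a hyperplane neighborhood of width $\leq \beta\rho_k$; by absolute decay of $\mu$ with exponent $\gamma$, $\mu(A_{k+1}\cap B_k) \leq C\beta^\gamma\mu(B_k)$, leaving a positive $\mu$-mass in $B_k\setminus A_{k+1}$. By Ahlfors $s$-regularity this residual set contains $N_\beta \gtrsim \beta^{-s}(1-C\beta^\gamma)$ pairwise disjoint balls of radius $\beta\rho_k$ centered on $K$, from which Bob picks one per branch. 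Iterating produces a Cantor-type tree inside $T\cap K$, and a standard mass-distribution / Frostman estimate bounds its Hausdorff dimension below by $\log N_\beta / \log(1/\beta)$, which tends to $s$ as $\beta\to 0$.

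The principal obstacle will be the $C^1$ pullback step: although $f$ sends hyperplanes to curved surfaces, at small enough scales these curved surfaces lie inside genuine thin hyperplane neighborhoods, and the required thickening constant must be controlled uniformly across both Bob's choice of center and the infinitely many maps $f_i$. This is achieved by covering $K$ with finitely many compact coordinate charts and invoking compactness on each $f_i$, but the bookkeeping needed to keep the accumulated $\beta$-losses summable is the one genuinely delicate ingredient of the proof.
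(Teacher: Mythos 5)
The paper does not prove this statement: it is imported verbatim as \cite[Corollary 5.4]{BFKRW}, so there is no in-paper argument to compare against. As a reconstruction of the cited result, your decomposition into (i) closure of hyperplane winning sets under countable intersection and under pullback by nonsingular $C^1$ maps, and (ii) a dimension lower bound $\dim(T\cap K)=\dim K$ for hyperplane winning $T$ and supports $K$ of absolutely decaying Ahlfors regular measures, matches how the result is actually organized in that reference, and the Cantor-tree/mass-distribution argument you sketch for (ii) is the right engine and works. The final deduction $\dim(S)=d$ via $K=[0,1]^d$ with Lebesgue measure is also correct.

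Two imprecisions are worth flagging. First, you appeal to ``Taylor's theorem'' to claim a quadratic linearization error in the pullback step; this requires $C^2$, whereas the hypothesis is only $C^1$. For a $C^1$ map one merely has $f(x)=f(x_0)+Df(x_0)(x-x_0)+o(|x-x_0|)$, with the error governed by the modulus of continuity of $Df$ (uniform on compacta), not by $\rho^2$. The argument survives because the hyperplane absolute game lets Alice pick her width parameter $\epsilon_{k+1}\le\beta$ freely on each turn, so a sublinear error $\omega(\rho)\rho$ can still be absorbed into a hyperplane neighborhood of width $\epsilon\rho$ at small enough scales — but the claimed quadratic rate is false as stated. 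Second, in the countable-intersection step the interleaved strategy $\sigma_i$ sees the radius shrink by $\beta^{n_i}$ between its turns (where $n_i$ is the gap in the progression assigned to index $i$), so it is being asked to win a $\beta^{n_i}$-game rather than a $\beta$-game; this is harmless precisely because ``hyperplane winning'' is quantified over all $\beta<1/3$, but the point should be made explicit since it is exactly where that universal quantifier is used.
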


Many self-similar fractals are supports of absolutely decaying and Ahlfors regular measures so we obtain for example that when $S$ is a hyperplane winning subset of $\R$ and $C$ is the Cantor ternary set, there is a set of dimension $\dim(C)$ consisting of real numbers $x\in C$ such that $x, x^2, x^3, \dots$ are all in $S$.

The \emph{Banach--Mazur--Schmidt game}, or \emph{BMS game}, was introduced in \cite{FRS}. It is played on a complete metric space $(X,d)$ with a target set $S$ and parameter $0 < \beta < 1$ being given. Bob again begins by choosing an arbitrary ball $B_0 = B(x_0, \rho_0)$. Now suppose $B_0, B_1, \dots, B_k$ have been chosen and write $B_i = B(x_i, \rho_i)$. Alice then chooses a ball $A_{k + 1} = B(y_{k + 1}, \beta \rho_k)$ satisfying $d(x_k, y_{k + 1}) + \beta \rho_k \le \rho_k$. (Note that this implies $A_{k + 1} \subset B_k$.) Bob then chooses $B_{k + 1} = B(x_{k + 1}, \rho_{k + 1})$ satisfying $d(x_{k + 1}, y_{k + 1}) + \rho_{k + 1} \le \beta \rho_k$. Here Bob is allowed to choose $0 < \rho_{k + 1} \le \rho_k$ arbitrarily. Again, the balls $B_i$ ($i\in\N$) are closed and nested with radii tending to zero, so the intersection $B_\infty = \bigcap_k B_k$ is a singleton. Again, the \emph{result} $x$ is the unique point such that $B_\infty = \{x\}$, and we call Alice the winner if and only if $x\in S$. If Alice has a strategy to win this game regardless of how Bob plays, we say that $S$ is \emph{$\beta$-BMS winning}. If $S$ is $\beta$-BMS winning for some $0 < \beta < 1$ then we say that $S$ is \emph{BMS winning}. Informally, the BMS game is a two-player game in which the first player (Bob) chooses balls according to the relatively lax rules of the Banach--Mazur game, while the second player (Alice) chooses according to the stricter rules of Schmidt's game. Accordingly, the class of BMS winning sets is very restrictive and has strong geometric properties. To make this precise, we need the following definition:

\begin{definition}
Given $\beta > 0$, a set $E \subset X$ is said to be \emph{uniformly $\beta$-porous} if there exists $r_0 > 0$ such that for every ball $B(x,r) \subset X$ with $r \le r_0$, there exists an open ball $B^{\circ}(y,\beta r) \subset B(x,r)$ such that $B^{\circ}(y,\beta r) \cap E = \varnothing$.
\end{definition}

\begin{theorem}[{\cite[Theorem 2.2]{FRS}}]
\label{BMSporous}
Let $(X,d)$ be a complete, separable metric space and fix $0 < \beta < 1$. Then a Borel set $S \subset X$ is $\beta$-BMS winning if and only if $X \setminus \bigcap_{i=1}^\infty S_i$ is a countable union of uniformly $\beta$-porous sets.
\end{theorem}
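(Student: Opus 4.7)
The plan is to establish both implications of this characterization, which I interpret as asserting that $S$ is $\beta$-BMS winning if and only if $X\setminus S$ is a countable union $\bigcup_{n=1}^\infty E_n$ of uniformly $\beta$-porous sets (the collection $(S_i)$ being any family with $S=\bigcap_i S_i$ and $E_i=X\setminus S_i$).

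For the easier direction $(\Leftarrow)$, I build Alice's winning strategy directly from porosity. Let $r_n>0$ witness the uniform $\beta$-porosity of $E_n$, and fix an enumeration $(j_k)_{k\ge 1}$ of $\mathbb{N}$ in which every positive integer appears infinitely often. At round $k$, Alice aims to dodge $E_{j_k}$. Given Bob's ball $B_k=B(x_k,\rho_k)$, once $\rho_k\le r_{j_k}$ (which eventually holds since $\rho_k\to 0$), uniform $\beta$-porosity of $E_{j_k}$ furnishes a point $y$ with $B^\circ(y,\beta\rho_k)\subset B(x_k,\rho_k)$ and $B^\circ(y,\beta\rho_k)\cap E_{j_k}=\varnothing$. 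Alice plays $A_{k+1}=B(y,\beta\rho_k)$; this satisfies the BMS containment $d(x_k,y)+\beta\rho_k\le\rho_k$. All of Bob's subsequent balls lie in $\overline{B}(y,\beta\rho_k)$, so the game result avoids $E_{j_k}$. Since each index recurs infinitely often, the result lies in $\bigcap_n (X\setminus E_n)=S$.

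For the harder direction $(\Rightarrow)$, I use Alice's winning strategy $\sigma$ to cover $X\setminus S$. Since $\sigma$ is winning, every $x\notin S$ is \emph{unreachable}: no Bob-play against $\sigma$ yields $x$ as the result. I parametrize the obstructions. For each $\sigma$-consistent finite history $h=(B_0,A_1,B_1,\ldots,B_n)$ with $A_{i+1}=\sigma(B_0,\ldots,B_i)$, let $A_{n+1}=\sigma(h)=B(y_{n+1},\beta\rho_n)$ be Alice's next move, and set $E_h = B_n \setminus A_{n+1}$. This $E_h$ is uniformly $\beta$-porous at scale $\rho_n$, because $B^\circ(y_{n+1},\beta\rho_n)\subset B_n$ and is disjoint from $E_h$ by construction. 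Any $x\notin S$ must eventually lie in some $E_h$: starting from any small opening ball containing $x$, the first stage at which $x$ escapes Alice's prescribed response ball pinpoints the history $h$ realizing $x\in E_h$. After discretizing Bob's radii to a countable dense set, the family of $\sigma$-consistent histories is countable, yielding the required cover.

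The main obstacle is the $(\Rightarrow)$ direction: making the ``escape-stage'' argument precise and verifying that the porosity obtained is genuinely uniform $\beta$-porosity (rather than a weaker Schmidt-style porosity), despite $\sigma$ being history-dependent. The key leverage is that Alice's move $A_{n+1}$ has \emph{exactly} radius $\beta\rho_n$, fitting the porosity definition at scale $\rho_n$. A secondary technicality for the $(\Leftarrow)$ direction is boundary behavior: if Bob plays $B_{k+1}$ tangent to $\partial B(y_{k+1},\beta\rho_k)$, his ball can touch $E_{j_k}$ on that sphere; the infinitely-many-dodges scheme absorbs this, since at some later round where Alice dodges $E_{j_k}$ again, Bob's ball is forced strictly into the interior of Alice's open $\beta$-ball.
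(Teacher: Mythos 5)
First, note that the paper itself does not prove this statement; it is quoted from \cite{FRS} (and the expression $X\setminus\bigcap_{i=1}^\infty S_i$ in the statement is evidently a typo for $X\setminus S$, as you guessed). Your $(\Leftarrow)$ direction is the standard argument and is essentially sound, with two caveats. In a general metric space the set inclusion $B^{\circ}(y,\beta r)\subset B(x,r)$ supplied by the porosity definition does \emph{not} imply the formal inequality $d(x,y)+\beta r\le r$ that the rules of the game require of Alice's move (it does in $\R^d$, but not, say, in spaces with isolated points), so the legality of Alice's move needs justification. More importantly, your boundary fix is not a fix: at a later round where Alice dodges $E_{j_k}$ again, Bob's ball is only forced into the \emph{closed} ball $\overline{B}(y,\beta\rho)$, never strictly into the open one, so the result of the game can lie on the sphere $\{z: d(z,y)=\beta\rho\}$ at every single dodging round; repetition alone does not resolve this.

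The $(\Rightarrow)$ direction has a fatal gap: the sets $E_h=B_n\setminus A_{n+1}$ are not uniformly $\beta$-porous. Uniform $\beta$-porosity requires that \emph{every} ball $B(x,r)$ with $r\le r_0$ contain an open ball of radius $\beta r$ disjoint from the set; your $E_h$ has exactly one hole, of radius $\beta\rho_n$, at one location. If $x\in E_h$ with $\dist(x,A_{n+1})>0$ and $r$ is tiny, then $B(x,r)\cap E_h$ can be all of $B(x,r)$ and there is no hole of radius $\beta r$ at all. Being ``porous at the single scale $\rho_n$'' is far weaker than uniform porosity, and no choice of $r_0$ rescues it. Consequently your countable cover of $X\setminus S$ consists of sets of the wrong type, and the escape-stage bookkeeping, however carefully discretized, cannot yield the theorem in this form. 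A correct proof must aggregate the escape events differently — for instance, for each fixed $n$, collecting the points excluded at step $n$ uniformly over all starting configurations at all comparable scales, so that the resulting set acquires $\beta$-holes inside every sufficiently small ball. As written, the forward implication is not established.
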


When $(X,d)$ is given additional structure, this theorem yields an estimate on the Hausdorff measure of a BMS winning set. Specifically, we consider metric spaces $(X,d)$ which support a measure of the following type:

\begin{definition}
A Borel measure on a metric space $(X,d)$ is called \emph{Ahlfors $s$-regular} if there exist $C > 1$ and $\rho_0 > 0$ such that for any $x \in X$ and $0 < \rho \leq \rho_0$ we have
\begin{equation}
\label{Ahlfors}
\frac{1}{C} \rho^s \le \mu(B(x,\rho)) \le C \rho^s.
\end{equation}
We say $\mu$ is Ahlfors regular if it is Ahlfors $s$-regular for some $s > 0$.
\end{definition}

If $(X,d)$ supports some Ahlfors $s$-regular Borel measure, then the $s$-dimensional Hausdorff measure on $X$ is Ahlfors $s$-regular as well \cite[Theorem 8.5]{MSU}. It follows that $\dim X = s$. On a metric space $(X,d)$ supporting such a measure, BMS winning sets are very large in the sense that their complement has positive codimension:

\begin{proposition}[{\cite[Corollary 2.5]{FRS}}]
\label{propositionBMScomplement}
If $X$ is Ahlfors $s$-regular and $S \subset X$ is BMS winning, then $\dim(X\setminus S) < s$.
\end{proposition}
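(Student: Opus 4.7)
The plan is to combine the porosity characterization of BMS winning sets (Theorem~\ref{BMSporous}) with the standard fact that a uniformly porous subset of an Ahlfors regular space has Hausdorff dimension strictly less than the ambient dimension. Since $S$ is BMS winning, it is $\beta$-BMS winning for some $\beta \in (0,1)$, so by Theorem~\ref{BMSporous}, $X \setminus S$ is a countable union $\bigcup_n E_n$ of uniformly $\beta$-porous sets. Hausdorff dimension is countably stable, so it suffices to prove a single uniform bound $\dim(E) \leq s - \delta$ valid for every uniformly $\beta$-porous $E \subset X$, with $\delta = \delta(\beta, s, C) > 0$ depending only on $\beta$, $s$, and the Ahlfors constant $C$.

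To establish this bound I would run an iterated covering argument. Fix $E$ uniformly $\beta$-porous with threshold $r_0$, and fix a ball $B_0 = B(x_0, r_0)$. The porosity condition supplies an open ball $B^{\circ}(y, \beta r_0) \subset B_0$ disjoint from $E$, so $E \cap B_0 \subset B_0 \setminus B^{\circ}(y, \beta r_0)$. Cover this complement by balls of radius $\beta r_0 / 4$. An arbitrary efficient cover of $B_0$ itself would require $\asymp (4/\beta)^s$ balls by Ahlfors regularity, but since the excluded hole has $\mu$-mass at least $C^{-1}(\beta r_0)^s$ while $\mu(B_0) \leq C r_0^s$, a fraction $c = c(\beta, C) > 0$ of the balls in a Vitali-style cover can be discarded. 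Hence $E \cap B_0$ is covered by at most $N := (1-c)(4/\beta)^s$ balls of radius $\beta r_0 / 4$. Iterating the construction $n$ times yields a cover of $E \cap B_0$ by at most $N^n$ balls of radius $(\beta/4)^n r_0$, so the standard box-counting formula gives
\[
\dim(E \cap B_0) \leq \frac{\log N}{\log(4/\beta)} = s + \frac{\log(1-c)}{\log(4/\beta)} < s.
\]
Since $X$ is covered by countably many balls of radius $r_0$, the required uniform bound follows by countable stability.

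The main obstacle is the geometric counting step that converts the measure-theoretic gain coming from the porosity hole into a genuine reduction in the cardinality of an efficient cover of the complement. The cleanest way to handle this is to fix a Vitali- or Besicovitch-type cover of $B(x, r)$ by balls of radius $\beta r / 4$ centered in $B(x, r)$, discard every member whose center lies in the slightly shrunken hole $B^{\circ}(y, \beta r / 2)$, and use the Ahlfors lower bound on $\mu\bigl(B^{\circ}(y, \beta r / 2)\bigr)$ to show that a fixed positive fraction of the cover is discarded. The constants are a little finicky, but no new ideas are required once this counting lemma is isolated.
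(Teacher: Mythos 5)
The overall strategy is the right one (and is indeed how [FRS, Corollary~2.5] is obtained): invoke Theorem~\ref{BMSporous} to write $X\setminus S$ as a countable union of uniformly $\beta$-porous sets, and then prove a uniform bound $\dim E \le s - \delta(\beta,s,C)$ for a single uniformly $\beta$-porous set $E$. Note that the paper itself does not prove this proposition; it cites it, so there is no internal proof to compare against --- the comparison here is against the correctness of your sketch.

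There is, however, a genuine gap in the box-counting step, and it is not merely a matter of ``finicky constants.'' You write $N := (1-c)(4/\beta)^s$ and conclude $\dim(E\cap B_0)\le \log N/\log(4/\beta) = s + \log(1-c)/\log(4/\beta) < s$. But the covering number of $B_0$ by balls of radius $\beta r_0/4$ is only $\asymp (4/\beta)^s$ with hidden multiplicative constants coming from the Ahlfors bounds (and from the purely geometric overhead of covering a ball by smaller balls, which is $>1$ even when $C=1$). Thus the honest bound is $N \le (1-c)\,C'\,(4/\beta)^s$ for some $C' = C'(s,C) > 1$, and iterating at the single fixed scale ratio $\beta/4$ gives $\dim(E\cap B_0) \le s + \log\bigl[(1-c)C'\bigr]/\log(4/\beta)$, which is $\ge s$ as soon as $C' \ge (1-c)^{-1}$. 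The multiplicative constants that harmlessly disappear when one proves $\dim X = s$ by letting the scale tend to $0$ do \emph{not} disappear here: with a fixed ratio they contribute an additive error $\log C'/\log(4/\beta)$ which is of the same order as the porosity gain $|\log(1-c)|/\log(4/\beta)$ and can dominate it. In other words, a fixed-ratio iterated-cover bound estimates an upper box dimension up to an error of this size, and the entire content of the theorem is a gain of exactly this size, so the argument cannot close in the form you have written it.

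The standard repair is to track measure rather than cardinality, so that the Ahlfors constants appear only in the contraction rate and not as a per-step prefactor. For instance: fix $\lambda < \beta/12$, let $Z_n$ be a maximal $\lambda^n r_0$-separated subset of $E\cap B_0$, and set $\widetilde U_n = \bigcup_{z\in Z_n} B(z, 2\lambda^n r_0)$, which is nested decreasing and contains $E \cap B_0$. Applying porosity inside each ball $B(z,\lambda^n r_0/3)$, $z\in Z_n$, produces pairwise disjoint holes of radius $\beta\lambda^n r_0/6$ whose $\lambda$-shrunken cores lie in $\widetilde U_n \setminus \widetilde U_{n+1}$; summing their masses against $\mu(\widetilde U_n)\le |Z_n|\,C(2\lambda^n r_0)^s$ gives $\mu(\widetilde U_{n+1}) \le (1-c)\mu(\widetilde U_n)$ with $c = C^{-2}(\beta/12)^s$. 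Since $|Z_n| C^{-1}(\lambda^n r_0/2)^s \le \mu(\widetilde U_n)$ by disjointness, this yields $|Z_n| \lesssim (1-c)^n \lambda^{-ns}$ and hence $\dim(E\cap B_0) \le s - |\log(1-c)|/\log(1/\lambda) < s$, with the Ahlfors constant $C$ entering only through $c$ and not as a prefactor on the covering count. (An alternative repair is to construct a Christ-type dyadic decomposition and work entirely inside it; but some additional device of this kind is genuinely needed, it is not just bookkeeping.)
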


It follows from this that any BMS winning set on a metric space supporting an Ahlfors $s$-regular measure must be conull with respect to the $s$-dimensional Hausdorff measure.

\section{Proof of Theorem \ref{theoremslow}}
We will need the following lemma due to Davenport (see \cite[Lemma 4]{KTV}) for a version which implies the one stated here):

\begin{lemma}[Simplex Lemma]
\label{simplex}
Let $d \in \N$, $r > 0$, and $\xx \in \R^d$. Then the rational points in $B(\xx, r)$ with denominator at most $(2d! r)^{-\frac{d}{d+1}}$ all lie in a single hyperplane.
\end{lemma}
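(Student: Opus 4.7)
The plan is to prove the Simplex Lemma by the classical volume-comparison argument. Suppose for contradiction that $B(\xx,r)$ contains $d+1$ rational points $\pp_1/q_1,\ldots,\pp_{d+1}/q_{d+1}$ with $q_i \le Q := (2d!\,r)^{-d/(d+1)}$ that are affinely independent (equivalently, not contained in any single hyperplane). These vertices span a non-degenerate $d$-simplex $\Delta \subset B(\xx,r)$, and I would derive a contradiction by estimating its $d$-dimensional Lebesgue volume $V$ from both sides.

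For the lower bound, I would use the determinant formula
\[
V = \frac{1}{d!}\left|\det \brmatrix{ 1 & \pp_1/q_1 \\ \vdots & \vdots \\ 1 & \pp_{d+1}/q_{d+1}}\right|,
\]
and factor out $1/q_i$ from the $i$-th row. The remaining $(d+1)\times(d+1)$ matrix has integer entries, and its determinant is nonzero by the affine-independence hypothesis; being a nonzero integer, its absolute value is at least $1$. This gives
\[
V \;\ge\; \frac{1}{d!\,\prod_{i=1}^{d+1} q_i} \;\ge\; \frac{1}{d!\,Q^{d+1}}.
\]

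For the upper bound, I would use that every edge vector $\pp_i/q_i - \pp_{d+1}/q_{d+1}$ of $\Delta$ has norm at most $2r$, and invoke Hadamard's inequality (after passing to a Euclidean norm comparable to $\|\cdot\|$, absorbing any dimensional constants) to bound the parallelepiped volume $|\det(\pp_i/q_i-\pp_{d+1}/q_{d+1})_{i=1}^d|$ by a $d$-dependent multiple of $r^d$, giving $V \le C_d\,r^d$. Combining the two bounds yields $Q^{d+1} \ge 1/(C_d d!\,r^d)$, which contradicts the definition of $Q$ once constants are tracked.

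The geometric idea is elementary and routine; the only genuine obstacle is bookkeeping, namely calibrating the upper bound on the simplex volume tightly enough that the contradiction matches the stated constant $(2d!\,r)^{-d/(d+1)}$. Depending on which norm $\|\cdot\|$ one works with and whether one bounds $V$ by the full Lebesgue measure of $B(\xx,r)$ or uses the Hadamard refinement on edge vectors, the constant in front of $r^{-d/(d+1)}$ shifts slightly; I would pick whichever upper-bound variant yields the cleanest match to the stated $Q$, as is done in the cited version of Davenport's proof in \cite{KTV}.
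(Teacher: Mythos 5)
The paper does not actually prove this lemma; it cites \cite[Lemma 4]{KTV} for a version which implies the one stated, so there is no in-paper proof to compare against. Your approach — the classical Davenport volume-comparison argument — is the standard proof and is essentially correct. Two remarks on the details you leave as bookkeeping.

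First, the lower bound is exactly right and is the only nontrivial input: assuming each $\pp_i/q_i$ is written in lowest terms, the $(d+1)\times(d+1)$ integer matrix with rows $(q_i,\pp_i)$ has nonzero determinant by affine independence, giving $V \ge 1/(d!\,Q^{d+1})$. Second, you do not actually need Hadamard's inequality for the upper bound, and bringing in the Euclidean/$\|\cdot\|$ comparison constant is an unnecessary complication: since the simplex is contained in $B(\xx,r)$, the trivial bound $V \le \vol(B(\xx,r))$ already suffices. In the max norm this is $(2r)^d$, and combining gives a contradiction whenever $Q < (d!)^{-1/(d+1)}(2r)^{-d/(d+1)}$. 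Since $(2d!\,r)^{-d/(d+1)} = (d!)^{-d/(d+1)}(2r)^{-d/(d+1)}$ and $(d!)^{-d/(d+1)} \le (d!)^{-1/(d+1)}$, the stated threshold is within this range, with room to spare for $d\ge 2$. One genuine wrinkle you should note: for $d=1$ the two expressions coincide and the chain of inequalities can be saturated (e.g. $x=1/2$, $r=1/2$, $Q=1$, with $0$ and $1$ both in $[0,1]$), so the lemma as literally stated requires either an open ball or a strict inequality on the denominator to hold in the boundary case. This is an issue with the statement rather than with your argument, and it is harmless in the paper's application, where radii shrink geometrically.
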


\begin{proof}[Proof of Theorem \ref{theoremslow}]
Fix $0 < \beta < 1/3$. Let $\rho_0 > 0$ be the radius of $B_0$. Since $g(k)\to\infty$, after extracting a subsequence we can assume without loss of generality that $\beta^{2^k + 1}/(4d!) \geq g(k)^{-1/2}$ for all $k$. (Taking a subsequence makes the set \eqref{slow} smaller, so if we can show that \eqref{slow} is hyperplane winning for the subsequence, then it is hyperplane winning for the whole sequence as well.) Write $\epsilon_k = g(k)^{-1/2}$. Alice will play the game in such a way that she guarantees that if $\xx$ denotes the result of the game, i.e. the unique point so that $\bigcap_{j \ge 0} B_j = \{\xx\}$, then for each $k \in \N$, $\left\|\frac{i_k}{j_k} \xx - \frac{\pp}{q}\right\| \ge \epsilon_{k}q^{-1 + 1/d}$ for all sufficiently large $q$. She will do this by partitioning $\N$ into infinitely many arithmetic progressions $P_k = 2^{k-1} + 2^k \N_0$ ($k\in\N$) and using her turns corresponding to indices in $P_k$ to ensure that the condition holds for $k$. (This argument mimics the original proof of the countable intersection stability of Schmidt's winning property.) This ensures that $L\left(\frac{i_k}{j_k}\xx\right)$ is larger than $\epsilon_k$.

Fix $k \in \N$ and $\ell\in P_k$, and we will describe Alice's strategy for the turn corresponding to the index $\ell$. Let 
\[
Q_m = \left(4d!\frac{i_k}{j_k} \beta^\ell \rho_0\right)^{-\frac{d}{d+1}}
\]
and let
\[
\QQ_m = \left\{\frac{j_k\pp}{i_kq} \colon Q_{m-1} \le q < Q_m \right\}.
\]
Note that every (reduced) $\frac{j_k\pp}{i_kq}$ with $q$ sufficiently large is in some $\QQ_m$ (where ``sufficiently large'' depends on $k$). Since $\ell\in P_k$, we have $\ell = 2^{k-1} + 2^k m$ for some $m \geq 0$. Given $B_\ell = B_{2^{k-1} + 2^k m}$, we consider $\QQ_m \cap 2B_{\ell}$, where $2B_\ell$ denotes the ball with the same center as $B_\ell$ and twice the radius. Applying Lemma \ref{simplex} with $r = 2(i_k/j_k)\beta^\ell \rho_0$, we see that all rationals in a given ball of radius $r$ of the form $\frac{\pp}{q}$ with $Q_{m-1} \le q < Q_m$ lie on a single hyperplane. Now, the map $\xx \mapsto \frac{j_k}{i_k}\xx$ sends balls of radius $2(i_k/j_k)\beta^\ell \rho_0$ to balls of radius $2\beta^\ell \rho_0$ and also sends hyperplanes to hyperplanes, so it follows that $\QQ_m \cap 2B_\ell$ is contained in a single hyperplane $\LL$. Alice will choose $A_\ell$ to be $\thickvar{\LL}{\beta^{\ell + 1}\rho_0}$. Then for any $\xx \in A_\ell$ and any $\frac{j_k\pp}{i_k q} \in \QQ_m\cap 2 B_\ell$, we have
\begin{align*}
\left\| \xx - \frac{j_k\pp}{i_k q}\right\| \ge \beta^{\ell+1}\rho_0 
			&= \frac{\beta^{2^k + 1}}{i_k/j_k} (i_k/j_k)\beta^{2^{k-1} + 2^k (m-1)}\rho_0\\
					&= \frac{\beta^{2^k + 1}}{i_k/j_k} \frac{Q_{m-1}^{-(1 + 1/d)}}{4d!}
			\ge \frac{\beta^{2^k + 1}}{4d!} \frac{1}{i_k/j_k} q^{-(1 + 1/d)}\\
							&\ge \frac{\epsilon_{k}}{i_k/j_k} q^{-(1 + 1/d)}.
\end{align*}
On the other hand, for any $\frac{j_k\pp}{i_k q} \in \QQ_m\butnot 2 B_\ell$ we have
\[
\left\| \xx - \frac{j_k\pp}{i_k q}\right\| \geq \beta^\ell\rho_0 \geq \beta^{\ell + 1}\rho_0 \geq \frac{\epsilon_{k}}{i_k/j_k} q^{-(1 + 1/d)}.
\]
Thus, if Alice plays according to this strategy and $\xx$ denotes the result of the game, then for each $k \in \N$, $\left\|\frac{i_k}{j_k} \xx - \frac{\pp}{q}\right\| \ge \epsilon_{k} q^{-(1 + 1/d)}$ for all $(\pp,q) \in \Z^d\times \N$.
\end{proof}

\section{Proof of Theorem \ref{theoremfast}}
Let $F_M \subset \R$ be the set of real numbers with partial quotients at most $M$. Recall that the set of badly approximable numbers is $\bigcup_{M \in \N} F_M$. We will prove that our set has full dimension in $\R$ by showing that it has full dimension within each $F_M$. To this end we prove the following:

\begin{proposition}
\label{propositionBMSwinning}
For each integer $M \ge 2$ and each reduced $\frac ij \in \Q$,  the set $\{x \in F_M \colon L\left(\frac ijx\right) \le \frac{1}{ij}\}$
is a BMS winning set on $F_M$.
\end{proposition}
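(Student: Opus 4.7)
The plan is to apply Theorem~\ref{BMSporous}, reducing the statement to showing that the complement $\{x \in F_M : L(\tfrac ij x) > \tfrac 1{ij}\}$ is a countable union of uniformly $\beta$-porous subsets of $F_M$, for a single $\beta > 0$ depending only on $i, j, M$. Unpacking the definition of $L$, the complement decomposes as the countable union $\bigcup_{c,N} E_{c,N}$ where $c > 1/(ij)$ ranges over rationals, $N$ over positive integers, and
\[
E_{c,N} = \{x \in F_M : |\tfrac ij x - p/q| \ge c/q^2 \text{ for all } (p,q) \in \Z \times \N \text{ with } q \ge N\},
\]
so it suffices to prove each $E_{c,N}$ is uniformly $\beta$-porous on $F_M$.

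The key arithmetic observation is this: if $z \in F_M$ admits a continued fraction convergent $p_k/q_k$ with $j \mid p_k$ and $i \mid q_k$, then setting $p' = p_k/j$ and $q' = q_k/i$ yields $\gcd(p',q') = 1$ and $p'/q' = (i/j)(p_k/q_k)$, so the standard convergent bound $|z - p_k/q_k| \le 1/q_k^2$ gives
\[
\left|\tfrac ij z - \tfrac{p'}{q'}\right| = \tfrac ij \left|z - \tfrac{p_k}{q_k}\right| \le \tfrac{i}{jq_k^2} = \tfrac{1}{ij\,(q')^2} < \tfrac{c}{(q')^2}
\]
for any $c > 1/(ij)$. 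Since $p_k/q_k$ is shared by every point in the level-$k$ CF cylinder of $z$, as soon as $q' \ge N$ the \emph{entire} cylinder $C[a_1,\dots,a_k]$ is disjoint from $E_{c,N}$.

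This reduces $\beta$-porosity to the following construction. Given an $F_M$-ball $B(x,r)$ with $r$ sufficiently small, select the deepest CF cylinder $C[x_1,\dots,x_{k_0}]$ containing $x$ whose diameter is at most $r$; by standard estimates its diameter is at least $c_1 r$ with $c_1 = c_1(M) > 0$. Extend by $s$ further partial quotients $b_1,\dots,b_s \in \{1,\dots,M\}$, with $s$ bounded by some $s_0 = s_0(i,j,M)$, so that the new convergent $p_{k_0+s}/q_{k_0+s}$ satisfies $j \mid p_{k_0+s}$ and $i \mid q_{k_0+s}$. The resulting sub-cylinder has diameter at least $\beta_0 r$ for $\beta_0 = \beta_0(i,j,M) > 0$, hence contains an $F_M$-ball $B(y,\beta r)$ for $\beta = \beta_0/3$. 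By the key observation this ball is disjoint from $E_{c,N}$ provided $r \le r_0$ with $r_0 = O((iN)^{-2})$, which guarantees $q_{k_0+s}/i \ge N$. The constant $\beta$ depends only on $i,j,M$, not on $c$ or $N$, as required.

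The main obstacle is the combinatorial step just invoked: that regardless of the initial convergent pair $(p_{k_0-1}, q_{k_0-1}), (p_{k_0}, q_{k_0})$, a uniformly bounded-length extension in $\{1,\dots,M\}^s$ exists achieving both divisibilities simultaneously. This is a reachability question on the finite state space $(\Z/j)^2 \times (\Z/i)^2$ under the nondeterministic dynamics $(u,v) \mapsto (v, bv + u)$ indexed by $b \in \{1,\dots,M\}$: the $p$-recursion reduces modulo $j$, the $q$-recursion modulo $i$, and both are driven by the same sequence of $b$'s. Equivalently, one needs the matrix semigroup generated by $B_b = \bigl(\begin{smallmatrix}0&1\\1&b\end{smallmatrix}\bigr)$ for $b \in \{1,\dots,M\}$, reduced modulo $ij$, to steer any admissible initial state onto the target coset in bounded time; this should follow by elementary arguments exploiting the rich structure of the continued fraction matrix monoid even in the minimal case $M = 2$.
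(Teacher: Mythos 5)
Your high-level route differs from the paper's: rather than transferring the game to the symbolic space $\{1,\ldots,M\}^\N$ via a quasisymmetric coding and having Alice play an explicit strategy, you invoke the porosity characterization (Theorem~\ref{BMSporous}) and aim to exhibit the complement of the target set as a countable union of uniformly porous subsets of $F_M$. That is a legitimate alternative, and your decomposition $\bigcup_{c,N} E_{c,N}$ and the argument that a cylinder whose corresponding convergent $p_k/q_k$ has $j \mid p_k$, $i \mid q_k$ (and $q_k \ge iN$) is disjoint from $E_{c,N}$, are both sound. The geometry of extracting a uniformly proportional sub-cylinder from a ball in $F_M$ is standard for bounded-type continued fraction sets. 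Were the remaining ingredient supplied, this would constitute a valid proof, arguably more transparent about \emph{why} the set is winning (its complement is porous).

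However, the remaining ingredient is the entire substance of the proof, and you have not supplied it. You reduce to the claim that from \emph{any} admissible pair of consecutive convergent residues mod $ij$, one can, in a number of steps bounded by $s_0(i,j,M)$ and using only partial quotients from $\{1,\ldots,M\}$, reach a state with $p \equiv 0 \pmod j$ and $q \equiv 0 \pmod i$, and then write that this ``should follow by elementary arguments exploiting the rich structure of the continued fraction matrix monoid even in the minimal case $M=2$.'' That is precisely the paper's Lemma~\ref{lemmachoices}, and it is not a routine observation: one must show that the semigroup generated by $\left(\begin{smallmatrix}0&1\\1&a\end{smallmatrix}\right)$, $1\le a\le M$, reduces modulo $ij$ to all of $\SL^\pm(2,\Z/ij\Z)$. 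The paper does this by first noting that a finite semigroup of invertible matrices is a group (so $g_a^{-1}\in H$), then exhibiting $\left(\begin{smallmatrix}1&1\\0&1\end{smallmatrix}\right) = g_1^{-1}g_2$ and $\left(\begin{smallmatrix}1&0\\1&1\end{smallmatrix}\right) = g_2g_1^{-1}$ and assembling $\left(\begin{smallmatrix}0&1\\-1&0\end{smallmatrix}\right)$, thereby hitting the standard generators of $\SL(2,\Z)$, and finally using $\det g_a=-1$ to get the $\pm$. You also need to verify that the target residue class $\left(\begin{smallmatrix}*&*\\0&0\end{smallmatrix}\right) \pmod{j,i}$ contains an element of $\SL^\pm(2,\Z/ij\Z)$ with the determinant forced by parity, which requires the coprimality of $i$ and $j$ (via B\'ezout). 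Until this reachability lemma is actually proved, the proposal has a genuine gap at its load-bearing step.
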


Before proving this, we show how it can be used to deduce Theorem \ref{theoremfast}.

\begin{proof}[Proof of Theorem \ref{theoremfast}]
Fix $M \in \N$. One can easily check that $F_M \cap [0, 1]$ is the limit set of the contracting family of self-conformal maps $\{f_i\}_{i=1}^M$ on $[a,b]$, where $f_i(x) = \frac{1}{i+x}$, $a = [0; M, 1, M, 1, \dots]$ and $b = [0; 1, M, 1, M, \dots]$, and that this system satisfies the open set condition. It follows (see \cite[Lemma 3.14]{MauldinUrbanski1}) that the $\delta_M$-dimensional Hausdorff measure $\HH^{\delta_M}$ is Ahlfors regular, where $\delta_M = \dim(F_M)$. By Proposition \ref{propositionBMSwinning} the set
\[
\left\{x \in F_M \colon L\left(\frac ij x\right) \le \frac{1}{ij}\text{ for all }i, j \in \N\right\}
\]
is a countable intersection of BMS winning sets, hence conull with respect to $\HH^{\delta_M}$ by Proposition \ref{propositionBMScomplement}. In particular,
\[\dim\left\{x \in \R \colon L\left(\frac ij x\right) \le \frac{1}{ij}\text{ for all }i, j \in \N\right\} \ge \delta_M.\]
Since $\dim\big(\bigcup_M F_M\big) = \dim(\BA_1) = 1$, we have $\lim_{M\to\infty} \delta_M = 1$, so the theorem follows.
\end{proof}

The main idea of the proof of Proposition \ref{propositionBMSwinning} is as follows: Call an approximation $p/q$ of $x$ ``good'' if $|x - p/q| < 1/q^2$. If $i/j\in\Q$ is fixed and $(jp)/(iq)$ is a good approximation of $x$, then $|x - (jp)/(iq)| < 1/(iq)^2$, and rearranging gives $|(i/j)x - p/q| < 1/(ijq^2)$. Thus, if $x$ has infinitely many good approximations whose denominator is a multiple of $i$ and whose numerator is a multiple of $j$, then $L\big(\frac ij x\big) \leq \frac{1}{ij}$. Alice will direct the play of the game to numbers $x$ with this property by using her turns to choose digits of the continued fraction expansion in such a way that infinitely many truncated expansions (i.e. convergents), which are good approximations to $x$, have numerator and denominator in the required sets. To show that this is possible we first prove the following lemma:
 
\begin{lemma}
\label{lemmachoices}
For any $i, j \in \N$ and $M \ge 2$, there exists $T = T(i, j, M) \in \N$ such that for any $n \in \N$ and any $a_1, a_2, \dots, a_n \in \N$, there exist $1\le t \le T$ and $a_{n+1}, \dots, a_{n+t} \in \{1, \dots, M\}$ such that $[0; a_1, \dots, a_{n+t}]$ is a rational with numerator
in $j \N$ and denominator in $i\N$.
\end{lemma}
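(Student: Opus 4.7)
The plan is to encode the state of the continued fraction expansion via the matrix $A_k := \begin{pmatrix} p_{k-1} & q_{k-1} \\ p_k & q_k \end{pmatrix}$, where $p_k/q_k = [0; a_1, \dots, a_k]$ is the $k$th convergent. The continued fraction recursion then reads $A_{k+1} = M_{a_{k+1}} A_k$ with $M_a := \begin{pmatrix} 0 & 1 \\ 1 & a \end{pmatrix}$, so appending a digit corresponds to left multiplication by a generator. Note that $A_k \in \mathrm{SL}_2^{\pm}(\Z) := \{g \in \mathrm{GL}_2(\Z) : \det g = \pm 1\}$ and that the condition ``$j \mid p_{n+t}$ and $i \mid q_{n+t}$'' depends only on the image of $A_{n+t}$ modulo $N := ij$. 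The lemma is vacuous unless $\gcd(i,j) = 1$ (since convergents are coprime), and I will assume this throughout.

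The heart of the proof is to show that the subsemigroup $\Gamma_M := \langle M_a : 1 \le a \le M \rangle$, reduced modulo $N$, equals $\mathrm{SL}_2^{\pm}(\Z/N\Z)$. Each $M_a$ has finite order in the finite group $\mathrm{GL}_2(\Z/N\Z)$, so $M_a^{-1}$ is a positive power of $M_a$ and $\Gamma_M$ is closed under inversion---hence a group. A direct calculation yields
\[
M_2 M_1^{-1} = \begin{pmatrix} 1 & 0 \\ 1 & 1 \end{pmatrix}, \qquad M_1^{-1} M_2 = \begin{pmatrix} 1 & 1 \\ 0 & 1 \end{pmatrix}.
\]
These two elementary matrices are a standard generating set for $\mathrm{SL}_2(\Z)$, and their reductions generate $\mathrm{SL}_2(\Z/N\Z)$. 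Including $M_1$, which has determinant $-1$, yields $\Gamma_M \supseteq \mathrm{SL}_2^{\pm}(\Z/N\Z)$; the reverse inclusion is immediate since $\det M_a = -1$.

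Using the Chinese remainder theorem (here the assumption $\gcd(i, j) = 1$ is essential), I exhibit a target $B \in \mathrm{SL}_2^{\pm}(\Z/N\Z)$ with $B_{21} \equiv 0 \pmod{j}$ and $B_{22} \equiv 0 \pmod{i}$: take $B$ to be the identity modulo $j$ and the swap $\begin{pmatrix} 0 & 1 \\ 1 & 0 \end{pmatrix}$ modulo $i$, then lift by CRT. Since $\Gamma_M$ is the whole group and $A_n \in \mathrm{SL}_2^{\pm}(\Z/N\Z)$, the element $B A_n^{-1}$ can be realized as a positive word $M_{a_{n+t}} \cdots M_{a_{n+1}}$ with each $a_{n+k} \in \{1, \dots, M\}$ and $t \ge 1$; then $A_{n+t} \equiv B \pmod{N}$, giving $j \mid p_{n+t}$ and $i \mid q_{n+t}$. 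A uniform bound $T = T(i, j, M)$ on the length of such words exists by finiteness of $\mathrm{SL}_2^{\pm}(\Z/N\Z)$.

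The main obstacle is the group-theoretic identification $\Gamma_M = \mathrm{SL}_2^{\pm}(\Z/N\Z)$, for which the hypothesis $M \ge 2$ is essential: with $M = 1$ the semigroup is cyclic and typically fails to act transitively, so having a second generator is precisely what forces us into the elementary generators of $\mathrm{SL}_2$.
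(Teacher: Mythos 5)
Your approach is essentially the paper's: encode the continued-fraction state via the convergent matrix $A_k$, observe that the semigroup $\Gamma_M$ generated by the $M_a$ modulo $N = ij$ is closed under inversion in the finite group $\mathrm{GL}_2(\Z/N\Z)$ (hence a group), produce the two elementary matrices via $M_1^{-1}M_2$ and $M_2M_1^{-1}$, conclude $\Gamma_M = \mathrm{SL}_2^{\pm}(\Z/N\Z)$, and extract a uniform bound $T$ from finiteness. These steps and the explicit matrix identities all match the paper.

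The gap is in your choice of the target matrix $B$. You lift the pair (identity mod $j$, swap $\left[\begin{smallmatrix}0&1\\1&0\end{smallmatrix}\right]$ mod $i$) by CRT, but then $\det B \equiv 1 \pmod{j}$ while $\det B \equiv -1 \pmod{i}$, and the unique CRT lift of these residues is generally not $\pm 1$ modulo $ij$. For instance with $i=3$, $j=5$ one gets $\det B \equiv 11 \pmod{15}$, which is neither $1$ nor $14$. Such a $B$ lies outside $\mathrm{SL}_2^{\pm}(\Z/ij\Z)$, and since every $M_a$ has determinant $-1$, every positive word in the $M_a$ lands in $\mathrm{SL}_2^{\pm}(\Z/ij\Z)$; so $B A_n^{-1}$ cannot be realized as such a word and the final step of your argument fails. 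The repair is straightforward: take $B$ to be the identity mod $j$ and $\left[\begin{smallmatrix}0&-1\\1&0\end{smallmatrix}\right]$ mod $i$, which still has $B_{21}\equiv 0 \pmod{j}$ and $B_{22}\equiv 0 \pmod{i}$ but now $\det B \equiv 1 \pmod{ij}$. (The paper instead writes down a single matrix $\left[\begin{smallmatrix} m_1 & m_2 \\ j & i \end{smallmatrix}\right]$ and uses B\'ezout on the coprime pair $(i,j)$ to choose $m_1, m_2$ enforcing the right determinant.) Note also that once $\det B = \pm 1$ is ensured, no separate parity check on the word length is needed: a positive word of length $\ell$ has determinant $(-1)^\ell$, so the determinant of $BA_n^{-1}$ automatically forces the correct parity of $t$.
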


\begin{proof}
Recall \cite[Theorem 1]{Khinchin_book} that if $x = [0;a_1,a_2,\dots]$, then the approximations obtained by truncating the continued fraction expansion of $x$, $\frac{p_n}{q_n} = [0; a_1, a_2, \dots, a_n]$, satisfy the recurrence relation
\begin{equation}
\label{recur}
\frac{p_{n}}{q_{n}} = \frac{a_n p_{n-1} + p_{n-2}}{a_n q_{n-1} + q_{n-2}}\cdot
\end{equation}
In other words,
\[\brmatrix{p_{n-1} & q_{n-1}\\ p_n & q_{n}} 
					= \brmatrix{0 & 1\\ 1 & a_n}\brmatrix{p_{n-2} & q_{n-2} \\ p_{n-1} & q_{n-1}}.\]
Now it is well-known (e.g. \cite[Theorem 2]{Khinchin_book}) that
\begin{equation}
\label{det}
\det \brmatrix{p_{n-1} & q_{n-1}\\ p_n & q_n} = (-1)^n.
\end{equation}
Fix $i$, $j$, and $M$, and let $a_1, \dots, a_n \in \{1, \dots, M\}$. We may assume $i$ and $j$ are coprime (i.e. that the fraction $i/j$ is reduced). We want to choose $a_{n+1}, a_{n+2}, \dots, a_{n+t}$ in such a way that
\[\brmatrix{p_{n+t-1} & q_{n+t-1}\\ p_{n+t} & q_{n+t}} = \brmatrix{0 & 1\\ 1 & a_{n+t}}\brmatrix{0 & 1\\ 1 & a_{n+t-1}}
	\cdots \brmatrix{0 & 1\\ 1 & a_{n+1}} \brmatrix{p_{n-1} & q_{n-1}\\ p_{n} & q_{n}} 
				\equiv \brmatrix{m_1 & m_2\\j & i} \text{ (mod $ij$)},\]
where $m_1, m_2 \in \Z/ij\Z$ satisfy $m_1 j + m_2 i \equiv (-1)^{n} \text{ (mod $ij$)}$ (such numbers exist since $\gcd(i,j) = 1$). It will suffice to show that the semigroup $H$ generated by elements of the form $g_a = \brmatrix{0 & 1\\1 & a}$ ($1 \le a \le M$) is equal to $G = \SL^\pm(2,\Z/ij\Z)$. (Since $G$ is finite, if $H = G$ then there exists $T$ such that every element of $G$ is the product of at most $T$ elements of the form $g_a$.) Since $G$ is finite, for any $g \in H$ there exist $1 \le k_1 < k_2$ such that $g^{k_1} = g^{k_2}$ and therefore $g^{-1} = g^{k_2 - k_1 -1} \in H$, so $H$ is equal to the \emph{group} generated by the elements $g_a$. In particular, we have
\[
\brmatrix{-a & 1\\\phantom{-}1 & 0} = \brmatrix{0 & 1\\1 & a}^{-1} = g_a^{-1} \in H
\]
for all $1 \le a \le M$.

Recall (e.g. \cite[p.139]{Newman}) that $\brmatrix{1 & 1\\ 0 & 1}$ and $\brmatrix{0 & 1 \\ -1 & \phantom{-}0}$ generate $\SL(2,\Z)$. Since
\begin{align*}
\brmatrix{1 & 1\\0 & 1} &=  \brmatrix{-1 & 1\\\phantom{-}1 & 0}\brmatrix{0 & 1\\1 & 2} \in H\\
\brmatrix{1 & 0\\1 & 1} &=  \brmatrix{0 & 1\\1 & 2}\brmatrix{-1 & 1\\ \phantom{-}1 & 0} \in H\\
\brmatrix{\phantom{-}0 & 1\\-1 & 0} 
      &= \brmatrix{1 & 1\\0 & 1}\brmatrix{\phantom{-}1 & 0\\-1 & 1}\brmatrix{1 & 1\\0 & 1}
	= \brmatrix{1 & 1\\0 & 1}\brmatrix{1 & 0\\1 & 1}^{-1} \brmatrix{1 & 1\\0 & 1} \in H,
\end{align*}
it follows that $\SL(2,\Z/ij\Z) \subset H$. Since $\det(g_a) = -1$ for all $a$, we have $G = \SL^\pm(2,\Z/ij\Z) = H$, which completes the proof.
\end{proof}

\begin{proof}[Proof of Proposition \ref{propositionBMSwinning}]
A modification of the argument of \cite[Theorem 1.2]{McMullen_absolute_winning} shows that the BMS game is invariant under quasisymmetric homeomorphisms, and in particular under the coding map $\pi:\{1,\ldots,M\}^\N\to F_M$ defined by the formula $\pi(a_1,a_2,\ldots) = [0;a_1,a_2,\ldots]$. When the BMS game is played on the space $\{1,\ldots,M\}^\N$, the gameplay is equivalent to the following: on each of Bob's moves, he chooses a word in the alphabet $E = \{1,\ldots,M\}$ which is an extension of Alice's most recent word, and on each of Alice's moves, she chooses a word in the alphabet $E$ which is an extension of Bob's most recent word by $T$ letters, where $T\in\N$ is fixed at the start of the game. The result of the game is the unique infinite word $\omega$ which extends all of these words, so Alice's goal is to make the number $x = \pi(\omega)$ satisfy $L\big(\frac ij x\big) \leq \frac{1}{ij}$. Now let $T$ be given by Lemma \ref{lemmachoices}, and let Alice's strategy be given as follows: if Bob's most recent word is the move $a_1,\ldots,a_n$, then Alice's next move will be the word $a_1,\ldots,a_{n + T}$, where $a_{n + 1},\ldots,a_{n + t}$ are given by Lemma \ref{lemmachoices}, and $a_{n + t + 1},\ldots,a_{n + T}$ are arbitrary. When Alice uses this strategy, each of her moves determines a new convergent of the continued fraction expansion of the final word whose numerator is in $j\N$ and whose denominator is in $i\N$.

Now, suppose $\frac{p_k}{q_k}$ is a convergent with $p_k \in j\N$ and $q_k \in i\N$, say $p_k = jp$ and $q_k = iq$. Then
\[\left|x - \frac{jp}{iq}\right| = \left|x - \frac{p_k}{q_k}\right|  
\le \left|\frac{p_{k + 1}}{q_{k + 1}} - \frac{p_k}{q_k}\right| = \frac{1}{q_{k + 1}q_k} \le \frac{1}{i^2 q^2}\]
and hence
\[\left| \frac ijx - \frac{p}{q}\right| = \frac ij\left| x - \frac{jp}{iq}\right| \le \frac{i}{ji^2q^2} = \frac{1}{ij}\frac{1}{q^2}\cdot\]
Since this holds for infinitely many fractions $\frac{p}{q}$, it follows that $L\left(\frac ij x\right) \le \frac{1}{ij}$.
\end{proof}

\bibliographystyle{amsplain}

\bibliography{bibliography}

\end{document}